\newcommand{\sgn}{\operatorname{sgn}}
\renewcommand{\cosh}{\operatorname{cosh}}
\newtheorem{theorem}{Theorem}
\newtheorem{remark}{Remark}
\title{Pseudoprocesses related to higher-order equations of vibrations of rods}
\author{ %\href{https://orcid.org/0000-0000-0000-0000}{Manfred Marvin Marchione}\thanks{Use footnote for providing further information about author (webpage, alternativeaddress)---\emph{not} for acknowledging funding agencies.}\\
	\href{https://orcid.org/0000-0002-6163-044X}{Manfred Marvin Marchione}\\
	Department of Statistical Sciences\\
	Sapienza University of Rome\\
	\texttt{manfredmarvin.marchione@uniroma1.it} \\
	%% examples of more authors
	\And
	\href{https://orcid.org/0000-0002-6421-533X}{Enzo Orsingher} \\
	Department of Statistical Sciences\\
	Sapienza University of Rome\\
	\texttt{enzo.orsingher@uniroma1.it}}
\date{\today}
\begin{document}
\maketitle

\begin{abstract}
In this paper we study Fresnel pseudoprocesses whose signed measure density is a solution to a higher-order extension of the equation of vibrations of rods. We also investigate space-fractional extensions of the pseudoprocesses related to the Riesz operator. The measure density is represented in terms of generalized Airy functions which include the classical Airy function as a particular case. We prove that the Fresnel pseudoprocess time-changed with an independent stable subordinator produces genuine stochastic processes. In particular, if the exponent of the subordinator is chosen in a suitable way, the time-changed pseudoprocess is identical in distribution to a mixture of stable processes. The case of a mixture of Cauchy distributions is discussed and we show that the symmetric mixture can be either unimodal or bimodal, while the probability density function of an asymmetric mixture can possibly have an inflection point.
\end{abstract}

\keywords{Fresnel pseudoprocesses \and Airy functions \and Stable processes \and Schr\"{o}dinger equation}

\section{Introduction}
\noindent In mathematical physics, the vibrations of rods are described by the partial differential equation
\begin{equation}\label{rodseq}\frac{\partial^2 u}{\partial t^2}=-\kappa^2\frac{\partial ^{4}u}{\partial x^{4}},\qquad x\in\mathbb{R},\;t>0\end{equation}
\noindent where the constant $\kappa^2$ is related to the structure of the vibrating rod (see Courant and Hilbert \cite{hilbert}, pag. 244-246). A probabilistic analysis of equation (\ref{rodseq}) has been performed by Orsingher and D'Ovidio \cite{orsingherdovidio_fresnel} for $\kappa=\frac{1}{2}$ by decomposing the equation into two Schr\"{o}dinger-type equations
\begin{equation}\label{schrodinger}\left(\frac{\partial}{\partial t}-\frac{i}{2}\frac{\partial^{2}}{\partial x^{2}}\right)\left(\frac{\partial}{\partial t}+\frac{i}{2}\frac{\partial^{2}}{\partial x^{2}}\right)u=0.\end{equation}
\noindent The Schr\"{o}dinger equation was examined from a probabilistic point of view by Ibragimov et al. \cite{ibragimov} and Platonova and Tsykin \cite{platonova}. By treating the Schr\"{o}dinger-type equations as heat equations with imaginary time, the fundamental solution to (\ref{rodseq}) can be expressed in the form
\begin{align}u(x,t)=&\frac{1}{2}\left[P(B(s)\in dx)\Big|_{s=it}+P(B(s)\in dx)\Big|_{s=-it}\right]/dx\nonumber\\
=&\frac{1}{2}\left[\frac{e^{-\frac{x^2}{2it}-i\frac{\pi}{4}}}{\sqrt{2\pi t}}+\frac{e^{\frac{x^2}{2it}+i\frac{\pi}{4}}}{\sqrt{2\pi t}}\right]=\frac{1}{\sqrt{2\pi t}}\cos\left(\frac{x^2}{2t}-\frac{\pi}{4}\right)\label{fresneldensity}\end{align}
\noindent where $B$ is a standard Brownian motion. The probabilistic interpretation of formula (\ref{fresneldensity}) is that a Brownian motion is started at the origin at time $t=0$ and it moves either with increasing or decreasing imaginary time with equal probabilities. The direction of the time is chosen once forever at $t=0$ and can't be changed.\\
\noindent In the same paper, the authors constructed a signed measure $\mathbb{P}$ on the space of real-valued measurable functions $x(t),\;t\ge0$, defined by
\begin{equation}\label{signedmeasuredef}\mathbb{P}\left(\cap_{j=1}^m \{a_j\le x(t_j)\le b_j\}\right)=\int_{a_1}^{b_1}\ldots\int_{a_m}^{b_m}\prod_{j=1}^m u(x_j-x_{j-1},\;t_j-t_{j-1})\;dx_j\end{equation}
\noindent with $t_0=0$ and $x_0=0$. The fundamental solution (\ref{fresneldensity}) can be viewed as a signed density function related to a pseudoprocess $F(t)$ which was called the Fresnel pseudoprocess by the authors. 
The signed measure $\mathbb{P}$ is well defined on the algebra $\mathcal{F}$ containing all the cylinder sets and is finitely additive on $\mathcal{F}$. Since $\mathbb{P}$ is signed, Kolmogorov's extension theorem cannot be applied to extend $\mathbb{P}$ to the $\sigma$-algebra generated by $\mathcal{F}$. Moreover, $\mathbb{P}$ has infinite total variation (see Krylov \cite{krylov}) and therefore cannot be extended to $\sigma\left(\mathcal{F}\right)$.\\
\noindent The construction method described by formula (\ref{signedmeasuredef}), which resembles the classical construction of the Wiener measure, was used by different authors for studying pseudoprocesses related to higher-order heat-type equations of the form \begin{equation}\label{htype}\frac{\partial u}{\partial t}=c_n\frac{\partial ^n u}{\partial x^n}\end{equation}
\noindent for suitable coefficients $c_n$. The even-order case was studied from a probabilistic point of view by Krylov \cite{krylov}. The odd-order case was first examined by Orsingher \cite{orsingher} and later developed by Lachal \cite{lachal, lachal2} and Orsingher and D'Ovidio \cite{orsingherdovidio_prob}. The authors obtained the exact distributions of various functionals of pseudoprocesses like the maximum and the sojourn time.\\
\noindent A generalization of equation (\ref{rodseq}) was considered by De Gregorio and Orsingher \cite{degregorioorsingher} who introduced pseudoprocesses related to equations of the form 
\begin{equation}\label{horodseq}\frac{\partial^2 u}{\partial t^2}=(-1)^{n+1}\frac{\partial ^{2n}u}{\partial x^{2n}},\qquad x\in\mathbb{R},\;t>0,\;n\ge2.\end{equation}
%\noindent By expressing equation (\ref{horodseq}) in the form
%$$\left(\frac{\partial}{\partial t}-e^{-i\frac{\pi}{2}(n+1)}\frac{\partial^{n}}{\partial x^{n}}\right)\left(\frac{\partial}{\partial t}+e^{-i\frac{\pi}{2}(n+1)}\frac{\partial^{n}}{\partial x^{n}}\right)u=0$$
%\noindent it is shown that the fundamental solution to (\ref{horodseq}) can be expressed as a linear combination of the solutions to the higher-order Schr\"{o}dinger-type equations
%\begin{equation}
%\label{decomp}
%\begin{dcases}
%\frac{\partial v_{1,n}}{\partial t}=e^{-i\frac{\pi}{2}(n+1)}\frac{\partial^{n}v_{1,n}}{\partial x^{n}}\\
%\frac{\partial v_{2,n}}{\partial t}=-e^{-i\frac{\pi}{2}(n+1)}\frac{\partial^{n}v_{2,n}}{\partial x^{n}}
%\end{dcases}
%\end{equation}
%\noindent The equations in (\ref{decomp}) can be solved by a Fourier transform approach and, by assuming initial conditions $v_{1,n}(x,0)=v_{2,n}(x,0)=\delta(x)$, we have that $$v_{1,n}(x,t)=\frac{1}{2\pi}\int_{-\infty}^{+\infty}e^{-i\gamma x-(-1)^nit\gamma^{n}}d\gamma,\qquad v_{2,n}(x,t)=\frac{1}{2\pi}\int_{-\infty}^{+\infty}e^{-i\gamma x+(-1)^nit\gamma^{n}}d\gamma.$$
\noindent The authors expressed the fundamental solution $u_{2n}(x,t)$ to equation (\ref{horodseq}) in the form
\begin{equation}u_{2n}(x,t)=\frac{1}{\pi}\int_0^{+\infty}\cos\left(\gamma x\right)\cos\left(\gamma^n t\right)d\gamma.\label{fundamentalsolution}\end{equation}
\noindent and they pointed out that, for $n=3$, the solution (\ref{fundamentalsolution}) can be written as
\begin{equation}\label{airyrepr}u_{6}(x,t)=\frac{1}{2(3t)^{\frac{1}{3}}}\left[\normalfont{\text{Ai}}\left(\frac{x}{(3t)^{\frac{1}{3}}}\right)+\normalfont{\text{Ai}}\left(-\frac{x}{(3t)^{\frac{1}{3}}}\right)\right],\;\; x\in\mathbb{R},\;t>0\end{equation}
\noindent where $\normalfont{\text{Ai}}(x)$ is the Airy function of the first kind.\\
\noindent In this paper we show that the representation (\ref{airyrepr}) can be extended to all values of $n$ by using the generalized Airy functions
\begin{equation}\label{fractionalairydef}\normalfont{\text{Ai}}_{\alpha}(x)=\frac{1}{\pi}\int_0^{+\infty}\cos\left(sx+\frac{s^{\alpha}}{\alpha}\right)ds,\qquad x\in\mathbb{R},\;\alpha>1\end{equation}
\noindent The generalized Airy function (\ref{fractionalairydef}), which coincides with the classical Airy function for $\alpha=3$, was first introduced by Ansari and Askari \cite{ansari} and Askari and Ansari \cite{askari} in the integer case $\alpha=n$, $n\ge 2$, as a solution to the ordinary differential equation
$$y^{(n)}(x)+\gamma_nxy(x)=0,\qquad \gamma_n=\begin{cases}(-1)^{\frac{n}{2}}&\text{for even}\;n\\-1&\text{for odd}\;n\end{cases}$$
\noindent and it was generalized to all possible values of $\alpha>1$ by Marchione and Orsingher \cite{marchione} which obtained the function (\ref{fractionalairydef}) as the solution to a fractional diffusion equation involving an higher-order Riesz-Feller operator.\\
\noindent The fundamental solution $u_{2n}(x,t)$ to equation (\ref{horodseq}) can be expressed in the form
\begin{equation}\label{genairyrepr}u_{2n}(x,t)=\frac{1}{2(nt)^{\frac{1}{n}}}\left[\normalfont{\text{Ai}}_{n}\left(\frac{x}{(nt)^{\frac{1}{n}}}\right)+\normalfont{\text{Ai}}_{n}\left(-\frac{x}{(nt)^{\frac{1}{n}}}\right)\right].\end{equation}
\noindent However, we observe that a distinction must be made between the even case $n=2r$ and the odd case $n=2r+1$. While in the even case only symmetric solutions of the form (\ref{genairyrepr}) are admitted, for odd values of $n$ we obtain asymmetric solutions of the form
\begin{equation}\label{asymmetric_pseudodensity_frac_intro}u_{2n, p}(x,t)=\frac{1}{(n t)^{\frac{1}{n}}}\Biggl[p\cdot \normalfont{\text{Ai}}_{n}\left(-\frac{x}{(n t)^{\frac{1}{n}}}\right)+(1-p)\cdot\normalfont{\text{Ai}}_{n}\left(\frac{x}{(n t)^{\frac{1}{n}}}\right)\Biggr]\end{equation}
\noindent Moreover, the probabilistic interepretation we give to formula (\ref{asymmetric_pseudodensity_frac_intro}) is substantially different from that given by Orsingher and D'Ovidio \cite{orsingherdovidio_fresnel} which is described by formula (\ref{fresneldensity}).\\
\noindent Our analysis begins with the study of the symmetric pseudo-density (\ref{genairyrepr}) which can be expressed in the form
$$u_{2n}(x,t)=\frac{1}{n\pi t^{\frac{1}{n}}}\sum_{k=0}^{\infty}\left(\frac{x}{t^{\frac{1}{n}}}\right)^{2k}\frac{\Gamma\left(\frac{2k+1}{n}\right)}{(2k)!}\sin\left(\pi\frac{(2k+1)(n+1)}{2n}\right)$$
\noindent Moreover, we show that the pseudo-density admits the probabilistic representation
\begin{equation}\label{introprob}u_{2n}(x,t)=\frac{1}{\pi x}\mathbb{E}\left[\sin\left(a_nxG_{n}(1/t)\right)\cosh\left(b_nxG_{n}(1/t)\right)\right]\end{equation}
\noindent where $G_{\gamma}(\tau)$ is a random variable with Weibull distribution having probability density function $g_{\gamma}(y;\tau)=\gamma\frac{y^{\gamma-1}}{\tau}\exp\left(-\frac{y^{\gamma}}{\tau}\right)$ with $y,\gamma,\tau>0$, $a_n=\cos\frac{\pi}{2n}$ and $b_n=\sin\frac{\pi}{2n}.$
\noindent Thus, the function $u_{2n}(x,t)$ can be interpreted as the expected value of amplified oscillations with Weibull distributed random parameters.\\
\noindent We then introduce a fractional extension of the Fresnel pseudoprocess by studying the space-fractional partial differential equation
\begin{equation*}\frac{\partial^2 u_{2\alpha}}{\partial t^2}=\frac{\partial ^{2\alpha}u_{2\alpha}}{\partial |x|^{2\alpha}},\qquad x\in\mathbb{R},\;t>0\end{equation*}
\noindent whose solution can be expressed in the form
$$u_{2\alpha,p}(x,t)=\frac{1}{(\alpha t)^{\frac{1}{\alpha}}}\Biggl[p\cdot \normalfont{\text{Ai}}_{\alpha}\left(-\frac{x}{(\alpha t)^{\frac{1}{\alpha}}}\right)+(1-p)\cdot\normalfont{\text{Ai}}_{\alpha}\left(\frac{x}{(\alpha t)^{\frac{1}{\alpha}}}\right)\Biggr]$$
\noindent where the Airy function $\normalfont{\text{Ai}}_{\alpha}(x)$ is defined in formula (\ref{fractionalairydef}).\\
\noindent The function $u_{2\alpha,p}(x,t)$ can be interepreted as the density of a fractional Fresnel pseudoprocess $F_{2\alpha}^{(p)}(t)$. Moreover, a probabilistic representation for $u_{2\alpha,p}(x,t)$ resembling formula (\ref{introprob}) holds.\\
\noindent We finally study the fractional Fresnel pseudoprocess $F_{2\alpha}^{(p)}(t)$ time-changed with an independent Lévy stable subordinator $S_{\theta}(t)$ having exponent $\theta$. We prove that, if the exponent $\theta$ is chosen in a suitable way, the time-changed pseudoprocess
$$Y_{2\alpha,\theta}^{(p)}(t)=F_{2\alpha}^{(p)}(S_{\theta}(t))$$
\noindent becomes a genuine stochastic process. In particular, we obtain that
\begin{equation}\label{introconclusion}Y_{2\alpha,\theta}^{(p)}(t)\overset{i.d.}{=}
\begin{dcases}
H_{\nu}(\sigma, \beta, \mu; t)\qquad&\text{with prob.}\;p\\
-H_{\nu}(\sigma, \beta, \mu; t)\qquad&\text{with prob.}\;1-p
\end{dcases}\end{equation}
\noindent where $H_{\nu}(\sigma, \beta, \mu; t)$ is an asymmetric Lévy stable process having suitable parameters which depend on $\alpha$ and $\theta$.\\
\noindent A special case of the relationship (\ref{introconclusion}) is obtained for $\theta=\frac{1}{\alpha}$. For such a choice of $\theta$, the distribution of the subordinated pseudoprocess is a mixture of Cauchy distributions. The case $\alpha=2,\;p=\frac{1}{2}$ was examined by Orsingher and D'Ovidio \cite{orsingherdovidio_fresnel} who obtained the symmetric probability density function \begin{equation}\label{specialbimodal}\mathbb{P}(Y_{4,\frac{1}{2}}(t)\in dx)/dx=\frac{t}{\pi\sqrt{2}}\;\frac{x^2+t^2}{x^4+t^4}.\end{equation}
It can be shown that the probability density (\ref{specialbimodal}) is bimodal and its two modes coincide with the points $x=\pm t\sqrt{\sqrt{2}-1}$. In general, we discover that, in the symmetric case $p=\frac{1}{2}$, probability density function of the subordinated pseudoprocess $$Y_{2\alpha,\frac{1}{\alpha}}(t)=F_{2\alpha}(S_{\frac{1}{\alpha}}(t))$$ is bimodal only for $1<\alpha<3$. Outside of this interval, that is for $\alpha\ge3$, the symmetric mixture distribution is unimodal.\\
\noindent In the case of asymmetric superposition of Cauchy densities emerging in the study of the subordinated pseudoprocess $Y_{2\alpha,\frac{1}{\alpha}}^{(p)}(t)$, the structure of the distribution is considerably more complicated. In this case we show that the probability density function can possibly have an inflection point. We prove this fact by finding a particular choice of $\alpha$ and $p$ such that the inflection point emerges.

%%%%%%%%%%%%%%%%%%%%%%%%%%%%%%%%%%%%%%%%%%%%%%%%%%%
%%%%%%%%%%%%%%%%%%%%%%%%%%%%%%%%%%%%%%%%%%%%%%%%%%%
%%%%%%%%%%%%%%%%%%%%%%%%%%%%%%%%%%%%%%%%%%%%%%%%%%%
%%%%%%%%%%%%%%%%%%%%%%%%%%%%%%%%%%%%%%%%%%%%%%%%%%%
%%%%%%%%%%%%%%%%%%%%%%%%%%%%%%%%%%%%%%%%%%%%%%%%%%%

\section{The higher-order Fresnel pseudoprocess}\label{section2}
\noindent We start our analysis by studying the solution to the partial differential equation
\begin{equation}\label{horodseq_main}\frac{\partial^2 u}{\partial t^2}=(-1)^{n+1}\frac{\partial ^{2n}u}{\partial x^{2n}},\qquad x\in\mathbb{R},\;t>0,\;n\ge2.\end{equation}
\noindent By expressing equation (\ref{horodseq_main}) in the form
$$\left(\frac{\partial}{\partial t}-e^{-i\frac{\pi}{2}(n+1)}\frac{\partial^{n}}{\partial x^{n}}\right)\left(\frac{\partial}{\partial t}+e^{-i\frac{\pi}{2}(n+1)}\frac{\partial^{n}}{\partial x^{n}}\right)u=0$$
\noindent De Gregorio and Orsingher \cite{degregorioorsingher} showed that the fundamental solution $u_{2n}(x,t)$ to (\ref{horodseq_main}) can be expressed as a linear combination of the functions
\begin{equation}\label{partialsolutions}v_{1,n}(x,t)=\frac{1}{2\pi}\int_{-\infty}^{+\infty}e^{-i\gamma x-(-1)^nit\gamma^{n}}d\gamma,\qquad v_{2,n}(x,t)=\frac{1}{2\pi}\int_{-\infty}^{+\infty}e^{-i\gamma x+(-1)^nit\gamma^{n}}d\gamma.\end{equation}
\noindent We emphasize that a distinction must be made whether $n$ is even or odd.\\
\noindent For even $n$, say $n=2r$, the functions in (\ref{partialsolutions}) are complex-valued. A real-valued solution to equation (\ref{horodseq_main}) is obtained by considering an equally-weighted linear combination of $v_{1,n}(x,t)$ and $v_{2,n}(x,t)$. Thus, for $n=2r$, the solution to equation (\ref{horodseq_main}) can be expressed as
 \begin{align}
 u_{4r}(x,t)=&\frac{1}{2}\left[v_{1,2r}(x,t)+v_{2,2r}(x,t)\right]\nonumber\\
=&\frac{1}{4\pi}\left[\int_0^{+\infty}e^{-i\gamma x-it\gamma^{2r}}d\gamma+\int_{-\infty}^0e^{-i\gamma x-it\gamma^{2r}}d\gamma\right.\nonumber\\
\;&\;\;\;\left.+\int_0^{+\infty}e^{-i\gamma x+it\gamma^{2r}}d\gamma+\int_{-\infty}^0e^{-i\gamma x+it\gamma^{2r}}d\gamma\right]\nonumber\\
=&\frac{1}{2}\left[\int_0^{+\infty}\cos\left(\gamma x+t\gamma^{2r}\right)d\gamma + \int_0^{+\infty}\cos\left(-\gamma x+t\gamma^{2r}\right)d\gamma \right]\nonumber\\
=&\frac{1}{2(2rt)^{\frac{1}{2r}}}\left[\normalfont{\text{Ai}}_{2r}\left(\frac{x}{(2rt)^{\frac{1}{2r}}}\right)+\normalfont{\text{Ai}}_{2r}\left(-\frac{x}{(2rt)^{\frac{1}{2r}}}\right)\right]\label{evensol}
 \end{align}
 \noindent where $\normalfont{\text{Ai}}_{\alpha}(x)$ is the generalized Airy function defined in formula (\ref{fractionalairydef}).\\
 \noindent For odd $n$, say $n=2r+1$, the functions $v_{1,n}(x,t)$ and $v_{2,n}(x,t)$ are real-valued and can be expressed in terms of generalized Airy functions:
%$$v_{1,2r+1}(x,t)=\frac{1}{[(2r+1)t]^{\frac{1}{2r+1}}}\normalfont{\text{Ai}}_{2r+1}\left(-\frac{x}{[(2r+1)t]^{\frac{1}{2r+1}}}\right)$$
$$v_{j,2r+1}(x,t)=\frac{1}{[(2r+1)t]^{\frac{1}{2r+1}}}\normalfont{\text{Ai}}_{2r+1}\left((-1)^j\frac{x}{[(2r+1)t]^{\frac{1}{2r+1}}}\right),\qquad j=1,2.$$
\noindent Thus, any linear combination of the form
\begin{align}u_{2r+1}(x,t)=\frac{1}{[(2r+1)t]^{\frac{1}{2r+1}}}\Biggl[p\cdot &\normalfont{\text{Ai}}_{2r+1}\left(-\frac{x}{[(2r+1)t]^{\frac{1}{2r+1}}}\right)\nonumber\\
+q\cdot&\normalfont{\text{Ai}}_{2r+1}\left(\frac{x}{[(2r+1)t]^{\frac{1}{2r+1}}}\right)\Biggr],\qquad p,q\in\mathbb{R}\label{pq}\end{align}
\noindent is a solution to equation (\ref{horodseq_main}) for $n=2r+1$. For $p=q=\frac{1}{2}$, a symmetric solution analogous to (\ref{evensol}) is obtained.\\
\noindent For  $q=1-p$, $0\le p\le1$ the solution (\ref{pq}) admits the following probabilistic interpretation. Consider the pseudoprocess $X_{n}(t)$ having pseudo-density \begin{equation*}\mathbb{P}(X_{n}(t)\in dx)/dx=\frac{1}{(nt)^{\frac{1}{n}}}\normalfont{\text{Ai}}_{n}\left(\frac{x}{(nt)^{\frac{1}{n}}}\right).\end{equation*}
\noindent The pseudoprocess $X_{n}(t)$ emerges from the analysis of odd-order heat-type equations of the form (\ref{htype}) (see Orsingher \cite{orsingher} and Lachal \cite{lachal, lachal2}) and their space-fractional extensions  (Marchione and Orsingher \cite{marchione}). From formula (\ref{asymmetric_pseudodensity_frac_intro}) it is clear that
\begin{equation*}F_{2n}^{(p)}(t)\overset{i.d.}{=}
\begin{dcases}
-X_{n}(t)\qquad&\text{with prob.}\;p\\
X_{n}(t)\qquad&\text{with prob.}\;1-p.
\end{dcases}\end{equation*}
\noindent Thus, we interpret the Fresnel pseudoprocess as a pseudoprocess obtained by randomly choosing at the time $t=0$ the sign of $X_{n}(t)$.\\
\noindent In the final section of the paper we will briefly discuss the asymmetric solution (\ref{pq}) for $q=1-p$, $0\le p\le1$. Throughout this section we will restrict ourselves to the analysis of the symmetric case
\begin{equation}\label{generalsol_airy}u_{2n}(x,t)=\frac{1}{2(nt)^{\frac{1}{n}}}\left[\normalfont{\text{Ai}}_{n}\left(\frac{x}{(nt)^{\frac{1}{n}}}\right)+\normalfont{\text{Ai}}_{n}\left(-\frac{x}{(nt)^{\frac{1}{n}}}\right)\right]\end{equation}
\noindent for both $n$ even and odd. The fundamental solution (\ref{generalsol_airy}) arises from the following Cauchy problem.
\begin{theorem}The function (\ref{generalsol_airy}) is the solution to the Cauchy problem
\begin{equation*}\begin{dcases}\frac{\partial^2 u_{2n}}{\partial t^2}=(-1)^{n+1}\frac{\partial ^{2n}u_{2n}}{\partial x^{2n}}\\u_{2n}(x,0)=\delta(x),\;\;\frac{\partial u_{2n}}{\partial t}(x,0)=0.\end{dcases}\end{equation*}\end{theorem}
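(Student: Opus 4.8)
The plan is to reduce the claimed solution (\ref{generalsol_airy}) to the oscillatory-integral representation (\ref{fundamentalsolution}) and then verify both the equation and the initial data directly on that representation.

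First I would establish the identity
\[
u_{2n}(x,t)=\frac{1}{\pi}\int_0^{+\infty}\cos(\gamma x)\cos(t\gamma^n)\,d\gamma .
\]
Starting from (\ref{generalsol_airy}) and inserting the definition (\ref{fractionalairydef}) into each summand, the change of variable $s=\gamma(nt)^{1/n}$ absorbs the prefactor $(nt)^{-1/n}$ and gives
\[
\frac{1}{(nt)^{1/n}}\,\text{Ai}_n\!\left(\pm\frac{x}{(nt)^{1/n}}\right)=\frac{1}{\pi}\int_0^{+\infty}\cos(\gamma x\pm t\gamma^n)\,d\gamma .
\]
Averaging the two and using $\cos(\gamma x+t\gamma^n)+\cos(\gamma x-t\gamma^n)=2\cos(\gamma x)\cos(t\gamma^n)$ yields the representation above, which is exactly (\ref{fundamentalsolution}).

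Next I would verify the Cauchy problem on this integral. Differentiating the integrand twice in $t$ brings down $-\gamma^{2n}\cos(t\gamma^n)$, while differentiating $\cos(\gamma x)$ exactly $2n$ times in $x$ produces $(-1)^n\gamma^{2n}\cos(\gamma x)$; multiplying the latter by $(-1)^{n+1}$ gives the factor $(-1)^{2n+1}\gamma^{2n}=-\gamma^{2n}$, so the two sides of the equation coincide term by term under the integral sign. For the initial data, setting $t=0$ leaves $\frac{1}{\pi}\int_0^{+\infty}\cos(\gamma x)\,d\gamma$, the standard cosine representation of $\delta(x)$, while the first $t$-derivative carries a factor $\sin(t\gamma^n)$ that vanishes at $t=0$, so $\partial_t u_{2n}(x,0)=0$.

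The main obstacle is that none of these integrals converge absolutely, so the differentiations under the integral sign and the evaluation at $t=0$ are only formal as written. I would make the argument rigorous by passing to the Fourier transform $\hat u(\gamma,t)=\int_{\mathbb{R}}e^{-i\gamma x}u(x,t)\,dx$: since $(i\gamma)^{2n}(-1)^{n+1}=-\gamma^{2n}$, the equation (\ref{horodseq_main}) becomes the ordinary differential equation $\partial_{tt}\hat u=-\gamma^{2n}\hat u$, whose solution under $\hat u(\gamma,0)=1$ and $\partial_t\hat u(\gamma,0)=0$ is $\hat u(\gamma,t)=\cos(|\gamma|^n t)$. Inverting this tempered distribution recovers precisely the representation above, so the identities hold in $\mathcal{S}'(\mathbb{R})$ and the delta initial datum is attained as a genuine distributional limit. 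Equivalently, one may insert a convergence factor $e^{-\varepsilon\gamma}$, justify every manipulation for $\varepsilon>0$ by dominated convergence, and let $\varepsilon\downarrow 0$.
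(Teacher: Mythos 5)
Your proof is correct and takes essentially the same route as the paper: the rigorous core of both arguments is the spatial Fourier transform, which turns the equation into the ODE $\partial_{tt}\widehat{u}_{2n}=-\gamma^{2n}\widehat{u}_{2n}$ whose solution under the initial data $\widehat{u}_{2n}(\gamma,0)=1$, $\partial_t\widehat{u}_{2n}(\gamma,0)=0$ is $\cos(\gamma^{n}t)$, followed by inversion and recognition of the generalized Airy functions via (\ref{fractionalairydef}). Your explicit change of variables identifying (\ref{generalsol_airy}) with the cosine-integral representation (\ref{fundamentalsolution}), and your remarks on distributional rigor, merely spell out steps the paper leaves implicit.
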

\begin{proof}The Fourier transform $\widehat{u}_{2n}(\gamma,t)=\int_{-\infty}^{+\infty}e^{i\gamma x}u_{2n}(x,t)d\gamma$ is the solution to the Cauchy problem
$$\begin{dcases}\frac{\partial^2 \widehat{u}_{2n}(\gamma,t)}{\partial t^2}=-\gamma^{2n}\widehat{u}_{2n}(\gamma,t)\\\widehat{u}_{2n}(\gamma,0)=1,\;\;\frac{\partial \widehat{u}_{2n}}{\partial t}(\gamma,0)=0.\end{dcases}$$
Thus, we obtain \begin{equation}\label{thmfou}\widehat{u}_{2n}(\gamma,t)=\cos(\gamma^nt)=\frac{1}{2}\left[e^{i\sgn(\gamma)|\gamma|^nt}+e^{-i\sgn(\gamma)|\gamma|^nt}\right].\end{equation}
\noindent By taking the inverse Fourier transform of formula (\ref{thmfou}) and having in mind the representation (\ref{fractionalairydef}) the proof is complete.
\end{proof}
 \noindent A power series representation for $u_{2n}(x,t)$ can be obtained by expressing the generalized Airy function in the following form (see Marchione and Orsingher \cite{marchione}):
 \begin{equation}\label{airypower}\normalfont{\text{Ai}}_{\alpha}(x)=\frac{1}{\pi\alpha^{\frac{\alpha-1}{\alpha}}}\sum_{k=0}^{\infty}\frac{x^k\alpha^{\frac{k}{\alpha}}}{k!}\Gamma\left(\frac{k+1}{\alpha}\right)\sin\left(\pi\frac{(k+1)(\alpha+1)}{2\alpha}\right),\qquad x\in\mathbb{R},\;\alpha>1.\end{equation}
 \noindent By combining the expressions (\ref{generalsol_airy}) and (\ref{airypower}), we have that
 \begin{equation}\label{seriessolution}u_{2n}(x,t)=\frac{1}{n\pi t^{\frac{1}{n}}}\sum_{k=0}^{\infty}\left(\frac{x}{t^{\frac{1}{n}}}\right)^{2k}\frac{\Gamma\left(\frac{2k+1}{n}\right)}{(2k)!}\sin\left(\pi\frac{(2k+1)(n+1)}{2n}\right)\end{equation}
 \noindent For $n=2$, by applying the gamma duplication formula $\frac{\Gamma\left(k+\frac{1}{2}\right)}{(2k)!}=\frac{\sqrt{\pi}}{2^k\Gamma(k+1)}$ and considering separately the odd-order terms and the even-order terms of the power series, formula (\ref{seriessolution}) becomes
\begin{align}
u_{4}(x,t)=&\frac{1}{2\sqrt{\pi t}}\sum_{k=0}^{\infty}\frac{1}{k!}\left(\frac{x^2}{4t}\right)^k\sin\left(\frac{3\pi}{4}(2k+1)\right)\nonumber\\
=&\frac{1}{2\sqrt{\pi t}}\left[\sum_{k=0}^{\infty}\frac{1}{(2k)!}\left(\frac{x^2}{4t}\right)^{2k}\sin\left(3k\pi+\frac{3}{4}\pi\right)\right.\nonumber\\
&\qquad+\left.\sum_{k=0}^{\infty}\frac{1}{(2k+1)!}\left(\frac{x^2}{4t}\right)^{2k+1}\sin\left(3k\pi+\frac{9}{4}\pi\right)\right]\nonumber\\
=&\frac{1}{2\sqrt{\pi t}}\left[\frac{1}{\sqrt{2}}\sum_{k=0}^{\infty}\frac{(-1)^k}{(2k)!}\left(\frac{x^2}{4t}\right)^{2k}+\frac{1}{\sqrt{2}}\sum_{k=0}^{\infty}\frac{(-1)^k}{(2k+1)!}\left(\frac{x^2}{4t}\right)^{2k+1}\right]\nonumber\\
=&\frac{1}{2\sqrt{\pi t}}\left[\frac{1}{\sqrt{2}}\cos\left(\frac{x^2}{4t}\right)+\frac{1}{\sqrt{2}}\sin\left(\frac{x^2}{4t}\right)\right]=\frac{1}{2\sqrt{\pi t}}\cos\left(\frac{x^2}{4t}-\frac{\pi}{4}\right)\label{cos}
\end{align}
which coincides with formula (\ref{fresneldensity}) up to a scaling factor.

\noindent In the following theorem we show that the power series representation (\ref{seriessolution}) can be interpreted as an expected value of amplified oscillations with Weibull distributed random parameters.

\begin{theorem}\label{weibullthm}The following relationship holds:
\begin{equation}u_{2n}(x,t)=\frac{1}{\pi x}\mathbb{E}\left[\sin\left(a_nxG_{n}(1/t)\right)\cosh\left(b_nxG_{n}(1/t)\right)\right]\end{equation}
\noindent where $G_{\gamma}(\tau)$ is a random variable with Weibull distribution having probability density function $$g_{\gamma}(y;\tau)=\gamma\frac{y^{\gamma-1}}{\tau}\exp\left(-\frac{y^{\gamma}}{\tau}\right),\qquad y,\gamma,\tau>0$$
and $$a_n=\cos\frac{\pi}{2n},\qquad b_n=\sin\frac{\pi}{2n}.$$
\end{theorem}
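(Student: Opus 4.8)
The plan is to evaluate the right-hand side directly as an integral against the Weibull density and to recover the power series (\ref{seriessolution}). Writing the expectation with $\tau=1/t$, so that the density becomes $g_n(y;1/t)=nt\,y^{n-1}e^{-ty^n}$, we have
$$\frac{1}{\pi x}\,\mathbb{E}\left[\sin\!\left(a_n x\,G_n(1/t)\right)\cosh\!\left(b_n x\,G_n(1/t)\right)\right]=\frac{nt}{\pi x}\int_0^{+\infty}\sin\!\left(a_n xy\right)\cosh\!\left(b_n xy\right)y^{n-1}e^{-ty^n}\,dy.$$
The key observation is that, since $a_n=\cos\frac{\pi}{2n}$ and $b_n=\sin\frac{\pi}{2n}$, one has $a_n+ib_n=e^{i\pi/(2n)}$, and therefore, using $\mathrm{Re}\,\sin(A+iB)=\sin A\cosh B$,
$$\sin\!\left(a_n xy\right)\cosh\!\left(b_n xy\right)=\mathrm{Re}\,\sin\!\left(xy\,e^{i\pi/(2n)}\right).$$
This collapses the product of two factors into the real part of a single entire function of $xy$, whose Taylor series can then be integrated term by term.

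Next I would expand $\sin\!\left(xy\,e^{i\pi/(2n)}\right)=\sum_{k\ge0}\frac{(-1)^k}{(2k+1)!}(xy)^{2k+1}e^{i\pi(2k+1)/(2n)}$ and interchange summation and integration. This interchange is the one point that genuinely requires care: the $\cosh$ factor grows like $e^{b_n|x|y}$, but for $n\ge2$ the Weibull density carries the super-exponential weight $e^{-ty^n}$, which dominates, so that $\sum_{k}\frac{|xy|^{2k+1}}{(2k+1)!}\,y^{n-1}e^{-ty^n}=\cosh(|x|y)\,y^{n-1}e^{-ty^n}$ is integrable on $(0,+\infty)$ and dominated convergence (equivalently Fubini--Tonelli) applies. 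Each resulting integral is an absolute moment of the Weibull law,
$$nt\int_0^{+\infty}y^{2k+1}\,y^{n-1}e^{-ty^n}\,dy=\mathbb{E}\left[G_n(1/t)^{2k+1}\right]=t^{-\frac{2k+1}{n}}\,\Gamma\!\left(1+\tfrac{2k+1}{n}\right),$$
obtained by the substitution $v=ty^n$.

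Substituting these moments back and pulling the factor $\frac{1}{\pi x}$ through gives
$$\frac{1}{n\pi}\sum_{k=0}^{\infty}\frac{(-1)^k}{(2k)!}\,x^{2k}\,t^{-\frac{2k+1}{n}}\,\Gamma\!\left(\tfrac{2k+1}{n}\right)\cos\!\left(\tfrac{\pi(2k+1)}{2n}\right),$$
where I used $\Gamma(1+\tfrac{2k+1}{n})=\tfrac{2k+1}{n}\Gamma(\tfrac{2k+1}{n})$ together with $\tfrac{2k+1}{(2k+1)!}=\tfrac{1}{(2k)!}$, and took the real part $\mathrm{Re}\,e^{i\pi(2k+1)/(2n)}=\cos\frac{\pi(2k+1)}{2n}$. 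It then remains only to match the trigonometric coefficient with that of (\ref{seriessolution}), that is, to verify the elementary identity
$$(-1)^k\cos\!\left(\tfrac{\pi(2k+1)}{2n}\right)=\sin\!\left(\tfrac{\pi(2k+1)(n+1)}{2n}\right),$$
which follows by writing $\tfrac{\pi(2k+1)(n+1)}{2n}=\tfrac{\pi(2k+1)}{2}+\tfrac{\pi(2k+1)}{2n}$ and using $\sin\tfrac{\pi(2k+1)}{2}=(-1)^k$ and $\cos\tfrac{\pi(2k+1)}{2}=0$ in the angle-addition formula. With this identity the series coincides termwise with (\ref{seriessolution}), completing the proof. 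The only real obstacle is the justification of the term-by-term integration; everything else is a direct computation of Weibull moments together with an elementary trigonometric reduction.
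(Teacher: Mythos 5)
Your proof is correct and is essentially the paper's own argument run in reverse: the paper starts from the series (\ref{seriessolution}), rewrites the Gamma factor as a Weibull-type integral, and sums the resulting series inside the integral via the identity $\sum_{k\ge0}\frac{z^{2k+1}}{(2k+1)!}\sin((2k+1)\phi)=\sin(z\sin\phi)\cosh(z\cos\phi)$ with $\phi=\frac{\pi(n+1)}{2n}$, which is exactly your observation $\sin(a_nxy)\cosh(b_nxy)=\mathrm{Re}\,\sin\left(xy\,e^{i\pi/(2n)}\right)$ combined with your trigonometric reduction. The only substantive difference is that you explicitly justify the sum--integral interchange by domination, a point the paper leaves implicit.
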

\begin{proof}
We start by writing formula (\ref{seriessolution}) in the form
\begin{align}u_{2n}(x,t)=&\frac{1}{\pi x}\sum_{k=0}^{\infty}\left(\frac{x}{t^{\frac{1}{n}}}\right)^{2k+1}\frac{\Gamma\left(1+\frac{2k+1}{n}\right)}{(2k+1)!}\sin\left(\pi\frac{(2k+1)(n+1)}{2n}\right)\nonumber\\
=&\frac{nt}{\pi x}\int_0^{+\infty}e^{-ty^{n}}y^{n-1}\sum_{k=0}^{\infty}\frac{(x y)^{2k+1}}{(2k+1)!}\sin\left(\pi\frac{(2k+1)(n+1)}{2n}\right)dy\label{thm_E1}
\end{align}
\noindent where we have used the integral representation of the gamma function. By using the relationship \begin{equation}\label{sinhseries}\sum_{k=0}^{\infty}\frac{x^{2k+1}}{(2k+1)!}\sin((2k+1)\phi)=\sin\left(x\sin\phi\right)\cosh(x\cos\phi)\end{equation}
\noindent formula (\ref{thm_E1}) becomes

\begin{equation*}u_{2n}(x,t)=\frac{nt}{\pi x}\int_0^{+\infty}e^{-ty^{n}}y^{n-1}\sin\left(xy\cos\frac{\pi}{2n}\right)\cosh\left(xy\sin\frac{\pi}{2n}\right)dy
\end{equation*}
\noindent which completes the proof.
\end{proof}

\noindent In view of formula (\ref{cos}), theorem \ref{weibullthm} implies that
\begin{equation}\label{expformula}\frac{1}{\pi x}\mathbb{E}\left[\sin\left(x\sqrt{\frac{Z}{2t}}\right)\cosh\left(x\sqrt{\frac{Z}{2t}}\right)\right]=\frac{1}{2\sqrt{\pi t}}\cos\left(\frac{x^2}{4t}-\frac{\pi}{4}\right)\end{equation}
\noindent where $Z$ is an exponential random variable with mean 1. By expressing the relationship (\ref{expformula}) in integral form we obtain the unexpected result
\begin{equation}\label{expformula_int}\frac{1}{\pi x}\int_0^{+\infty}\sin\left(x\sqrt{\frac{y}{2t}}\right)\cosh\left(x\sqrt{\frac{y}{2t}}\right)e^{-y}dy=\frac{1}{2\sqrt{\pi t}}\cos\left(\frac{x^2}{4t}-\frac{\pi}{4}\right).\end{equation}

\begin{remark}Formula (\ref{expformula_int}) can be obtained also by direct calculation. By observing that
$$\int_{-\infty}^{+\infty}we^{-w^2+Aw}dw=\frac{A\sqrt{\pi}}{2}\;e^{\frac{A^2}{4}},\qquad A\in\mathbb{C}.$$
\noindent we can write
\begin{align}&\frac{1}{\pi x}\int_0^{+\infty}\sin\left(x\sqrt{\frac{y}{2t}}\right)\cosh\left(x\sqrt{\frac{y}{2t}}\right)e^{-y}dy\nonumber\\
&=\frac{1}{8i\pi x}\int_{-\infty}^{+\infty}\left(e^{ix\sqrt{\frac{y}{2t}}}-e^{-ix\sqrt{\frac{y}{2t}}}\right)\left(e^{x\sqrt{\frac{y}{2t}}}+e^{-x\sqrt{\frac{y}{2t}}}\right)e^{-y}dy\nonumber\\
&=\frac{1}{4i\pi x}\int_{-\infty}^{+\infty}we^{-w^2}\left[e^{w\frac{x}{\sqrt{2t}}(1+i)}+e^{w\frac{x}{\sqrt{2t}}(i-1)}-e^{w\frac{x}{\sqrt{2t}}(1-i)}-e^{-w\frac{x}{\sqrt{2t}}(1+i)}\right]dw\nonumber\\
&=\frac{1}{4i\pi}\sqrt{\frac{\pi}{2t}}\left[(1+i)e^{i\frac{x^2}{4t}}+(i-1)e^{-i\frac{x^2}{4t}}\right]=\frac{1}{2\sqrt{2\pi t}}\left[\cos\left(\frac{x^2}{4t}\right)+\sin\left(\frac{x^2}{4t}\right)\right]\nonumber\\
&=\frac{1}{2\sqrt{\pi t}}\cos\left(\frac{x^2}{4t}-\frac{\pi}{4}\right).\nonumber\end{align}\end{remark}

\section{The fractional Fresnel pseudoprocess}
\noindent We now study a fractional generalization of the higher-order Fresnel pseudoprocess. In particular, we consider the pseudoprocess related to the space-fractional Cauchy problem
\begin{equation}\label{fractionalcau}\begin{dcases}\frac{\partial^2 u_{2\alpha}}{\partial t^2}(x,t)=\frac{\partial ^{2\alpha}u_{2\alpha}}{\partial |x|^{2\alpha}}(x,t),\qquad x\in\mathbb{R},\;t>0,\;\alpha>1\\u_{2\alpha}(x,0)=\delta(x),\;\;\frac{\partial u_{2\alpha}}{\partial t}(x,0)=0.\end{dcases}\end{equation}
\noindent where the operator $\frac{\partial ^{2\alpha}}{\partial |x|^{2\alpha}}$ denotes the Riesz fractional derivative. First proposed by Riesz \cite{riesz}, the Riesz fractional derivative can be implicitely defined by means of its Fourier transform
\begin{equation*}\mathcal{F}\left\{\frac{\partial ^{\alpha}f(x)}{\partial |x|^{\alpha}}\right\}(\gamma)=-|\gamma|^\alpha\mathcal{F}\{f(x)\}(\gamma).\end{equation*}

\noindent For a function $f :\mathbb{R}\to\mathbb{R}$, $f\in C_m(\mathbb{R})$, with derivatives decaying to 0 for $|x| \to +\infty$, the following explicit representation can be given to the Riesz fractional derivative of order $\alpha$, $m-1<\alpha< m$
$$\frac{\partial^{\alpha}f(x)}{\partial |x|^{\alpha}}=-\frac{1}{2\Gamma(m-\alpha)\cos\left(\frac{\pi\alpha}{2}\right)}\frac{d^m}{dx^m}\left[\int_{-\infty}^x\frac{f(z)}{(x-z)^{\alpha-m+1}}dz+(-1)^m\int_{x}^{\infty}\frac{f(z)}{(z-x)^{\alpha-m+1}}dz\right].$$

\noindent In the next theorem we show that the Riesz fractional differential operator is related to a fractional extension of the higher-order Fresnel pseudoprocess examined in the previous section.
\begin{theorem}The solution to the fractional Cauchy problem (\ref{fractionalcau}) can be expressed in the form

\begin{equation}\label{frac_generalsol_airy}u_{2\alpha}(x,t)=\frac{1}{2(\alpha t)^{\frac{1}{\alpha}}}\left[\normalfont{\text{Ai}}_{\alpha}\left(\frac{x}{(\alpha t)^{\frac{1}{\alpha}}}\right)+\normalfont{\text{Ai}}_{\alpha}\left(-\frac{x}{(\alpha t)^{\frac{1}{\alpha}}}\right)\right],\qquad x\in\mathbb{R},\;t>0,\;\alpha>1.\end{equation}
\end{theorem}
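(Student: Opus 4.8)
The plan is to follow exactly the Fourier-transform scheme used in the proof of the first theorem, now with the Riesz symbol in place of the integer-order symbol. First I would take the Fourier transform in $x$, writing $\widehat{u}_{2\alpha}(\gamma,t)=\int_{-\infty}^{+\infty}e^{i\gamma x}u_{2\alpha}(x,t)\,dx$. Using the defining Fourier symbol of the Riesz derivative supplied before the statement, the operator $\partial^{2\alpha}/\partial|x|^{2\alpha}$ acts as multiplication by $-|\gamma|^{2\alpha}$, so the space-fractional equation in (\ref{fractionalcau}) is converted into the ordinary differential equation $\partial_t^2\widehat{u}_{2\alpha}(\gamma,t)=-|\gamma|^{2\alpha}\widehat{u}_{2\alpha}(\gamma,t)$, supplemented by the transformed initial data $\widehat{u}_{2\alpha}(\gamma,0)=1$ and $\partial_t\widehat{u}_{2\alpha}(\gamma,0)=0$. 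This is a harmonic-oscillator equation in $t$ with frequency $|\gamma|^{\alpha}$, whose unique solution is $\widehat{u}_{2\alpha}(\gamma,t)=\cos(|\gamma|^{\alpha}t)$, the natural fractional analogue of (\ref{thmfou}).

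Next I would invert the Fourier transform. Since $\cos(|\gamma|^{\alpha}t)$ is even in $\gamma$, the inversion integral collapses to $u_{2\alpha}(x,t)=\frac{1}{\pi}\int_0^{+\infty}\cos(\gamma x)\cos(\gamma^{\alpha}t)\,d\gamma$, which is precisely the representation (\ref{fundamentalsolution}) with $n$ replaced by $\alpha$. Applying the product-to-sum identity $\cos(\gamma x)\cos(\gamma^{\alpha}t)=\frac12[\cos(\gamma x+\gamma^{\alpha}t)+\cos(\gamma x-\gamma^{\alpha}t)]$ and performing in each resulting integral the scaling substitution $s=\gamma(\alpha t)^{1/\alpha}$ --- which sends $\gamma^{\alpha}t$ to $s^{\alpha}/\alpha$ and $\gamma x$ to $sx/(\alpha t)^{1/\alpha}$ --- matches both terms with the integral definition (\ref{fractionalairydef}) of $\text{Ai}_{\alpha}$, evaluated at $x/(\alpha t)^{1/\alpha}$ and at $-x/(\alpha t)^{1/\alpha}$ respectively (the evenness of the cosine converting the sign of $s^{\alpha}/\alpha$ into the sign of the argument for the second term). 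Collecting the prefactors then yields exactly (\ref{frac_generalsol_airy}). This is the same chain of manipulations already carried out in (\ref{evensol}) for even integer $n$, so essentially no new algebra is required.

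The one point deserving care is that all the integrals involved are only conditionally convergent Fresnel-type oscillatory integrals, so each step --- the action of the Riesz operator, the Fourier inversion, and the interchange with the scaling substitution --- must be read in the distributional sense rather than as absolutely convergent Lebesgue integrals. The genuinely non-trivial input is the Fourier symbol $-|\gamma|^{2\alpha}$ of the Riesz derivative of order $2\alpha$, which is exactly the property recalled before the statement; once this symbol is granted, the fractional case becomes formally identical to the integer case, and the main obstacle reduces to the bookkeeping of $|\gamma|^{\alpha}$ in place of $\gamma^{n}$ together with verifying that the scaling substitution indeed aligns the two cosine integrals with the generalized Airy representation (\ref{fractionalairydef}).
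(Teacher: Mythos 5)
Your proposal is correct and takes essentially the same route as the paper: Fourier transforming in $x$, using the Riesz symbol to reduce the problem to the oscillator ODE with solution $\widehat{u}_{2\alpha}(\gamma,t)=\cos(|\gamma|^{\alpha}t)$, and then inverting against the definition (\ref{fractionalairydef}) of $\text{Ai}_{\alpha}$. The only difference is that you spell out the inversion step explicitly (evenness of the symbol, product-to-sum identity, and the scaling substitution $s=\gamma(\alpha t)^{1/\alpha}$), details which the paper's proof leaves implicit.
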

\begin{proof}The Fourier transform $\widehat{u}_{2\alpha}(\gamma,t)=\int_{-\infty}^{+\infty}e^{i\gamma x}u_{2\alpha}(x,t)d\gamma$ is the solution to the Cauchy problem
$$\begin{dcases}\frac{\partial^2 \widehat{u}_{2\alpha}(\gamma,t)}{\partial t^2}=-|\gamma|^{2\alpha}\widehat{u}_{2\alpha}(\gamma,t)\\\widehat{u}_{2\alpha}(\gamma,0)=1,\;\;\frac{\partial \widehat{u}_{2\alpha}}{\partial t}(\gamma,0)=0.\end{dcases}$$
Thus, we obtain \begin{equation}\label{frac_thmfou}\widehat{u}_{2\alpha}(\gamma,t)=\cos(|\gamma|^\alpha t)=\frac{1}{2}\left[e^{i\sgn(\gamma)|\gamma|^\alpha t}+e^{-i\sgn(\gamma)|\gamma|^\alpha t}\right].\end{equation}
\noindent By taking the inverse Fourier transform of (\ref{frac_thmfou}) and considering formula (\ref{fractionalairydef}) the proof is complete.
\end{proof}

\noindent By means of the usual construction described in (\ref{signedmeasuredef}), the function $u_{2\alpha(x,t)}$ can be viewed as the density of a signed measure on the space of real-valued measurable functions. Similarly to the integer case, the pseudo-density admits a power series representation as
\begin{equation}\label{frac_seriessolution}u_{2\alpha}(x,t)=\frac{1}{\alpha\pi t^{\frac{1}{\alpha}}}\sum_{k=0}^{\infty}\left(\frac{x}{t^{\frac{1}{\alpha}}}\right)^{2k}\frac{\Gamma\left(\frac{2k+1}{\alpha}\right)}{(2k)!}\sin\left(\pi\frac{(2k+1)(\alpha+1)}{2\alpha}\right),\qquad\alpha>1\end{equation}
\noindent which can be easily obtained by using (\ref{airypower}). Moreover, the pseudo-density (\ref{frac_seriessolution}) admits a probabilistic representation in terms of an expected value of amplified oscillations with Weibull distributed random parameters
\begin{equation}\label{frac_cosh}u_{2\alpha}(x,t)=\frac{1}{\pi x}\mathbb{E}\left[\sin\left(a_\alpha xG_{\alpha}(1/t)\right)\cosh\left(b_\alpha xG_{\alpha}(1/t)\right)\right]\end{equation}
\noindent where $G_{\gamma}(\tau)$ is a random variable with Weibull distribution as in theorem \ref{weibullthm} and $a_\alpha=\cos\frac{\pi}{2\alpha}$, $b_\alpha=\sin\frac{\pi}{2\alpha}.$
\noindent We omit the proof of formula (\ref{frac_cosh}) which coincides with the proof of theorem \ref{weibullthm} in view of (\ref{frac_seriessolution}).
%\begin{theorem}\label{frac_weibullthm}The following relationship holds:
%\begin{equation}\label{frac_cosh}u_{2\alpha}(x,t)=\frac{1}{\pi x}\mathbb{E}\left[\sin\left(a_\alpha xG_{\alpha}(1/t)\right)\cosh\left(b_\alpha xG_{\alpha}(1/t)\right)\right]\end{equation}
%\noindent where $G_{\gamma}(\tau)$ is a random variable with Weibull distribution having probability density function $$g_{\gamma}(y;\tau)=\gamma\frac{y^{\gamma-1}}{\tau}\exp\left(-\frac{y^{\gamma}}{\tau}\right),\qquad y,\gamma,\tau>0$$
%and $$a_\alpha=\cos\frac{\pi}{2\alpha},\qquad b_\alpha=\sin\frac{\pi}{2\alpha}.$$
%\end{theorem}
%\begin{proof}
%\noindent By expressing formula (\ref{frac_seriessolution}) in the form
%\begin{align}u_{2\alpha}(x,t)=&\frac{1}{\pi x}\sum_{k=0}^{\infty}\left(\frac{x}{t^{\frac{1}{\alpha}}}\right)^{2k+1}\frac{\Gamma\left(1+\frac{2k+1}{\alpha}\right)}{(2k+1)!}\sin\left(\pi\frac{(2k+1)(\alpha+1)}{2\alpha}\right)\nonumber\\
%=&\frac{\alpha t}{\pi x}\int_0^{+\infty}e^{-ty^{\alpha}}y^{\alpha-1}\sum_{k=0}^{\infty}\frac{(x y)^{2k+1}}{(2k+1)!}\sin\left(\pi\frac{(2k+1)(\alpha+1)}{2\alpha}\right)dy\nonumber
%\end{align}
%\noindent formula (\ref{sinhseries}) permits us to write
%\begin{equation*}u_{2\alpha}(x,t)=\frac{\alpha t}{\pi x}\int_0^{+\infty}e^{-ty^{\alpha}}y^{\alpha-1}\sin\left(xy\cos\frac{\pi}{2\alpha}\right)\cosh\left(xy\sin\frac{\pi}{2\alpha}\right)dy
%\end{equation*}
%\noindent which completes the proof.
%\end{proof}

%%%%%%%%%%%%%%%%%%%%%%%%%%%%%%%%%%%%%%%%%%%%%%%%%%%%%%%%%%%%%
%%%%%%%%%%%%%%%%%%%%%%%%%%%%%%%%%%%%%%%%%%%%%%%%%%%%%%%%%%%%%
%%%%%%%%%%%%%%%%%%%%%%%%%%%%%%%%%%%%%%%%%%%%%%%%%%%%%%%%%%%%%
\section{Subordinated Fresnel pseudoprocesses}
\noindent In this section we study the fractional Fresnel pseudoprocess $F_{2\alpha}(t)$ time-changed with an independent stable subordinator $S_{\theta}(t)$ having characteristic function
\begin{equation}\label{subchf}\mathbb{E}\left[e^{i\gamma S_{\theta}(t)}\right]=e^{-t|\gamma|^{\theta}e^{-i\frac{\pi\theta}{2}\sgn\gamma}}.\end{equation} Our first result concerns the pseudo-density of the subordinated pseudoprocess, for which a power series representation is obtained in the following theorem.
\begin{theorem}Let $F_{2\alpha}(t)$ be the Fresnel pseudoprocess of order $\alpha>1$ and $S_{\theta}(t)$ an independent stable subordinator of exponent $\theta$ with characteristic function (\ref{subchf}). If $\alpha\theta>1$ the following relationship holds:
\begin{align}&\mathbb{P}\left(F_{2\alpha}(S_{\theta}(t))\in dx\right)/dx\nonumber\\
\;\;&=\frac{1}{\alpha\theta\pi t^{\frac{1}{\alpha\theta}}}\sum_{k=0}^{\infty}\left(\frac{x}{t^{\frac{1}{\alpha\theta}}}\right)^{2k}\frac{\Gamma\left(\frac{2k+1}{\alpha\theta}\right)}{(2k)!}\sin\left(\pi\frac{(2k+1)(\alpha+1)}{2\alpha}\right),\;\; x\in\mathbb{R}.\label{finalsectionthm1thesis}\end{align}
\end{theorem}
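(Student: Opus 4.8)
The plan is to use the defining relation for a time-changed pseudoprocess. Since $S_\theta(t)$ is a genuine, independent random variable, the pseudo-density of $F_{2\alpha}(S_\theta(t))$ is obtained by integrating the pseudo-density $u_{2\alpha}(x,s)$ against the law of the subordinator,
$$\mathbb{P}\left(F_{2\alpha}(S_\theta(t))\in dx\right)/dx=\int_0^{+\infty}u_{2\alpha}(x,s)\,\mathbb{P}(S_\theta(t)\in ds).$$
First I would insert the power series (\ref{frac_seriessolution}) for $u_{2\alpha}(x,s)$ and interchange summation and integration. This reduces the whole problem to evaluating, for each $k$, the negative fractional moment $\mathbb{E}\bigl[S_\theta(t)^{-(2k+1)/\alpha}\bigr]$, since the $k$-th term carries a factor $s^{-(2k+1)/\alpha}$ coming from the prefactor $s^{-1/\alpha}$ together with $(x/s^{1/\alpha})^{2k}$.

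The core of the argument is this fractional moment. I would first use the self-similarity $S_\theta(t)\overset{d}{=}t^{1/\theta}S_\theta(1)$ to pull out the factor $t^{-(2k+1)/(\alpha\theta)}$, and then evaluate $\mathbb{E}[S_\theta(1)^{-\delta}]$ with $\delta=(2k+1)/\alpha>0$ by means of the elementary identity $s^{-\delta}=\frac{1}{\Gamma(\delta)}\int_0^{+\infty}\lambda^{\delta-1}e^{-\lambda s}\,d\lambda$. Fubini converts the moment into $\frac{1}{\Gamma(\delta)}\int_0^{+\infty}\lambda^{\delta-1}\mathbb{E}[e^{-\lambda S_\theta(1)}]\,d\lambda$, and substituting the Laplace transform $\mathbb{E}[e^{-\lambda S_\theta(t)}]=e^{-t\lambda^\theta}$ (the Laplace counterpart of (\ref{subchf}), recovered by $\lambda=-i\gamma$) leaves a single gamma integral, yielding
$$\mathbb{E}\bigl[S_\theta(t)^{-(2k+1)/\alpha}\bigr]=\frac{\Gamma\!\left(\frac{2k+1}{\alpha\theta}\right)}{\theta\,\Gamma\!\left(\frac{2k+1}{\alpha}\right)}\,t^{-\frac{2k+1}{\alpha\theta}}.$$
Substituting this back into the series, the factor $\Gamma\!\left(\tfrac{2k+1}{\alpha}\right)$ produced by (\ref{frac_seriessolution}) cancels exactly, the prefactor $\tfrac{1}{\alpha}$ combines with $\tfrac{1}{\theta}$ into $\tfrac{1}{\alpha\theta}$, and factoring $t^{-1/(\alpha\theta)}$ out of the series reproduces precisely (\ref{finalsectionthm1thesis}). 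Note that the phase $\sin\!\bigl(\pi(2k+1)(\alpha+1)/(2\alpha)\bigr)$ is inherited verbatim from $u_{2\alpha}$ and is untouched by the subordination, so that only the order inside the gamma factors and in the powers of $t$ migrates from $\alpha$ to $\alpha\theta$.

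The step I expect to be the main obstacle is justifying the interchange of summation and integration, and this is exactly where the hypothesis $\alpha\theta>1$ is consumed. Because the series for $u_{2\alpha}(x,s)$ oscillates and is only conditionally controlled as $s\to0^+$, term-by-term integration does not follow from naive absolute convergence; I would instead integrate the partial sums and control the tail through a Stirling estimate of $\Gamma\!\left(\frac{2k+1}{\alpha\theta}\right)/(2k)!$. That estimate shows the resulting series is of the same type as the generalized Airy series (\ref{airypower}) with order parameter $\alpha\theta$—which converges precisely when the order exceeds $1$—and simultaneously certifies that the limit of the integrated partial sums equals the claimed series. As a cleaner alternative that sidesteps the term-by-term bookkeeping, I could start from the Fourier representation $u_{2\alpha}(x,s)=\frac{1}{\pi}\int_0^{+\infty}\cos(\gamma x)\cos(\gamma^\alpha s)\,d\gamma$, apply Fubini in $(\gamma,s)$ to obtain $\frac{1}{\pi}\int_0^{+\infty}\cos(\gamma x)\,\mathrm{Re}\,\mathbb{E}[e^{i\gamma^\alpha S_\theta(t)}]\,d\gamma$, and then expand $\cos(\gamma x)$; the complex gamma integral $\int_0^{+\infty}\gamma^{2k}e^{-t\gamma^{\alpha\theta}e^{-i\pi\theta/2}}\,d\gamma$ reproduces the same coefficients and makes the emergence of $\alpha\theta$ transparent, with the identity $(-1)^k\cos\!\bigl(\tfrac{\pi(2k+1)}{2\alpha}\bigr)=\sin\!\bigl(\pi\tfrac{(2k+1)(\alpha+1)}{2\alpha}\bigr)$ recovering the stated phase.
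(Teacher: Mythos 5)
Your proposal is correct, and it shares the paper's overall skeleton (subordination integral, insertion of the power series (\ref{frac_seriessolution}), term-by-term integration), but the computational core is handled by a genuinely different route. The paper never isolates the negative moment explicitly: it writes the subordinator density via the Wright function, $h_{\theta}(s,t)=\frac{\theta t}{s^{\theta+1}}W_{-\theta,1-\theta}\left(-\frac{t}{s^{\theta}}\right)$, changes variables so that each term becomes $\int_0^{+\infty}y^{\frac{2k+1}{\alpha\theta}}W_{-\theta,1-\theta}(-y)\,dy$, and then invokes the Mellin transform of the Wright function from Prudnikov's tables, $\int_0^{+\infty}W_{a,b}(-x)x^{\eta-1}dx=\Gamma(\eta)/\Gamma(b-a\eta)$, to produce the ratio of gamma functions. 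You instead recognize the term-wise integral as the fractional negative moment $\mathbb{E}\bigl[S_{\theta}(t)^{-(2k+1)/\alpha}\bigr]$ and evaluate it from first principles: self-similarity, the identity $s^{-\delta}=\frac{1}{\Gamma(\delta)}\int_0^{+\infty}\lambda^{\delta-1}e^{-\lambda s}d\lambda$, Fubini, and the Laplace transform $\mathbb{E}[e^{-\lambda S_{\theta}(t)}]=e^{-t\lambda^{\theta}}$; your resulting value $\frac{\Gamma\left(\frac{2k+1}{\alpha\theta}\right)}{\theta\,\Gamma\left(\frac{2k+1}{\alpha}\right)}t^{-\frac{2k+1}{\alpha\theta}}$ agrees exactly with what the Mellin/Wright identity yields, and the cancellation of $\Gamma\left(\frac{2k+1}{\alpha}\right)$ reproduces (\ref{finalsectionthm1thesis}). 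Your route is more elementary and self-contained, requiring only the standard Laplace transform of the subordinator rather than Wright-function machinery and tabulated Mellin transforms; the paper's route is shorter on the page once the table identity is granted. On the analytic side, both arguments rest the validity of the sum--integral interchange and the convergence of the resulting series on the same Stirling estimate of $\Gamma\left(\frac{2k+1}{\alpha\theta}\right)/(2k)!$, which is where $\alpha\theta>1$ enters; the paper states this in one line, and your tail-control sketch is at the same level of rigor. Your alternative Fourier-side derivation at the end is also sound and is closer in spirit to how the paper later derives the characteristic function (\ref{genuineprob}), but it is not how this theorem is proved there.
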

\begin{proof}
We use the Wright function representation of the probability density function $h_{\theta}(x,t)$ of the stable subordinator $S_{\theta}(t)$
\begin{equation*}h_{\theta}(x,t)=\frac{\theta t}{x^{\theta+1}}W_{-\theta,1-\theta}\left(-\frac{t}{x^{\theta}}\right)=\frac{\theta}{x}\sum_{k=0}^{\infty}\frac{(-1)^kt^{k+1}}{x^{\theta(k+1)}k!\;\Gamma(-\theta(k+1)+1)},\qquad x>0,\;t>0.\end{equation*}
In light of (\ref{frac_seriessolution}) we can write
\begin{align}\mathbb{P}( F_{2\alpha}&(S_{\theta}(t))\in dx)/dx=\int_0^{+\infty}u_{2\alpha}(x,s)h_{\theta}(s,t)ds\nonumber\\
=&\frac{\theta t}{\alpha\pi}\sum_{k=0}^{\infty}\frac{x^{2k}}{(2k)!}\;\Gamma\left(\frac{2k+1}{\alpha}\right)\sin\left(\pi\frac{(2k+1)(\alpha+1)}{2\alpha}\right)\cdot\nonumber\\
&\hspace{5cm}\cdot\int_0^{+\infty}s^{-\theta-\frac{2k+1}{\alpha}-1}W_{-\theta,1-\theta}\left(-\frac{t}{s^{\theta}}\right)ds\nonumber\\
=&\frac{1}{\alpha\pi t^{\frac{1}{\alpha\theta}}}\sum_{k=0}^{\infty}\left(\frac{x}{t^{\frac{1}{\alpha\theta}}}\right)^{2k}\frac{\Gamma\left(\frac{2k+1}{\alpha}\right)}{(2k)!}\sin\left(\pi\frac{(2k+1)(\alpha+1)}{2\alpha}\right)\cdot\nonumber\\
&\hspace{5cm}\cdot\int_0^{+\infty}y^{\frac{2k+1}{\alpha\theta}}W_{-\theta,1-\theta}\left(-y\right)dy.\label{finalsec_thm1}\end{align}
\noindent By using the Mellin transform of the Wright function (see Prudnikov et al. \cite{prudnikov}, pag. 355) 
$$\int_0^{+\infty}\text{W}_{a,b}(-x)\;x^{\eta-1}dx=\frac{\Gamma(\eta)}{\Gamma(b-a\eta)},\qquad\eta>0,\;a>-1,\;b\in\mathbb{C}$$
\noindent the representation (\ref{finalsectionthm1thesis}) is easily obtained. The convergence of the power series for $\alpha\theta>1$ can be proved by using the Stirling formula for the gamma function.
%\noindent formula (\ref{finalsec_thm1}) becomes
%$$\mathbb{P}( F_{2\alpha}(S_{\theta}(t))\in dx)/dx=\frac{1}{\alpha\theta\pi t^{\frac{1}{\alpha\theta}}}\sum_{k=0}^{\infty}\left(\frac{x}{t^{\frac{1}{\alpha\theta}}}\right)^{2k}\frac{\Gamma\left(\frac{2k+1}{\alpha\theta}\right)}{(2k)!}\sin\left(\pi\frac{(2k+1)(n+1)}{2\alpha}\right).$$
\end{proof}
\noindent The next theorem shows that the probabilistic representation in terms of amplified oscillations with Weibull distributed random parameters is maintained if the Fresnel pseudoprocess is time-changed with an independent stable subordinator.

\begin{theorem}\label{frac_weibullthm}For $\alpha\theta>1$ the following relationship holds:
\begin{equation}\mathbb{P}\left(F_{2\alpha}(S_{\theta}(t))\in dx\right)/dx=\frac{1}{\pi x}\mathbb{E}\left[\sin\left(a_{\alpha}xG_{\alpha\theta}(1/t)\right)\cosh\left(b_{\alpha}xG_{\alpha\theta}(1/t)\right)\right]\end{equation}
\noindent where $G_{\gamma}(\tau)$ is a random variable with Weibull distribution having probability density function $$g_{\gamma}(y;\tau)=\gamma\frac{y^{\gamma-1}}{\tau}\exp\left(-\frac{y^{\gamma}}{\tau}\right),\qquad y,\gamma,\tau>0$$
and $a_{\alpha}=\cos\frac{\pi}{2\alpha},\qquad b_{\alpha}=\sin\frac{\pi}{2\alpha}.$
\end{theorem}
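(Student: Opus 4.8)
The plan is to mimic, almost verbatim, the proof of Theorem \ref{weibullthm}, since the power series (\ref{finalsectionthm1thesis}) already has exactly the structure exploited there. The crucial observation is that in (\ref{finalsectionthm1thesis}) the normalization, the powers of $t$ and the gamma argument all carry the exponent $\alpha\theta$, whereas the phase inside the sine still carries $\alpha$. Consequently the Weibull variable that will emerge has shape parameter $\alpha\theta$, while the amplitude constants $a_{\alpha},b_{\alpha}$ are governed by $\alpha$ alone — which is precisely the asymmetry recorded in the statement.

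First I would rewrite (\ref{finalsectionthm1thesis}) so that the generic term carries the odd power $x^{2k+1}$ together with an overall factor $\frac{1}{\pi x}$, using $\Gamma\!\left(1+\tfrac{2k+1}{\alpha\theta}\right)=\tfrac{2k+1}{\alpha\theta}\,\Gamma\!\left(\tfrac{2k+1}{\alpha\theta}\right)$ and $(2k+1)!=(2k+1)(2k)!$. This is the same algebraic shift used to pass from (\ref{seriessolution}) to (\ref{thm_E1}), and it recasts the series as $\frac{1}{\pi x}\sum_k \left(\frac{x}{t^{1/(\alpha\theta)}}\right)^{2k+1}\frac{\Gamma(1+\frac{2k+1}{\alpha\theta})}{(2k+1)!}\sin\!\left(\pi\frac{(2k+1)(\alpha+1)}{2\alpha}\right)$. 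Next I would feed each gamma factor back into an integral via the substitution $u=t\,y^{\alpha\theta}$ in $\Gamma(s)=\int_0^{\infty}e^{-u}u^{s-1}\,du$ with $s=1+\frac{2k+1}{\alpha\theta}$; this produces the kernel $\alpha\theta\,t\,y^{\alpha\theta-1}e^{-t y^{\alpha\theta}}$, which is exactly the Weibull density $g_{\alpha\theta}(y;1/t)$.

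After interchanging the sum and the integral, the inner series is $\sum_k \frac{(xy)^{2k+1}}{(2k+1)!}\sin\!\left((2k+1)\phi\right)$ with $\phi=\frac{\pi(\alpha+1)}{2\alpha}$, which by the identity (\ref{sinhseries}) equals $\sin(xy\sin\phi)\cosh(xy\cos\phi)$. Since $\sin\phi=\cos\frac{\pi}{2\alpha}=a_{\alpha}$ and $\cos\phi=-\sin\frac{\pi}{2\alpha}$, the evenness of $\cosh$ yields $\sin(a_{\alpha}xy)\cosh(b_{\alpha}xy)$; recognizing the Weibull density then gives $\frac{1}{\pi x}\mathbb{E}[\sin(a_{\alpha}x\,G_{\alpha\theta}(1/t))\cosh(b_{\alpha}x\,G_{\alpha\theta}(1/t))]$, as claimed.

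The only genuine technical point — and the step I expect to require care rather than mechanical algebra — is justifying the interchange of summation and integration. Here the hypothesis $\alpha\theta>1$ is essential: the Weibull tail $e^{-ty^{\alpha\theta}}$ has super-linear decay in the exponent and therefore dominates the exponential growth $\cosh(b_{\alpha}xy)\sim \frac12 e^{b_{\alpha}|x|y}$ for every fixed $x$, ensuring absolute convergence of $\int_0^{\infty} g_{\alpha\theta}(y;1/t)\,|{\sin(a_{\alpha}xy)}|\cosh(b_{\alpha}xy)\,dy$ and legitimizing Fubini--Tonelli. For $\alpha\theta\le 1$ this domination fails, which is consistent with the breakdown of the series representation (\ref{finalsectionthm1thesis}) outside the stated range.
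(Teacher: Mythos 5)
Your proposal is correct and follows essentially the same route as the paper's own proof: the same shift $\Gamma\left(\tfrac{2k+1}{\alpha\theta}+1\right)=\tfrac{2k+1}{\alpha\theta}\Gamma\left(\tfrac{2k+1}{\alpha\theta}\right)$ to odd powers with the prefactor $\tfrac{1}{\pi x}$, the same substitution $u=ty^{\alpha\theta}$ in the gamma integral producing the Weibull kernel $g_{\alpha\theta}(y;1/t)$, and the same application of identity (\ref{sinhseries}) with $\phi=\tfrac{\pi}{2}+\tfrac{\pi}{2\alpha}$. Your explicit justification of the sum--integral interchange via Fubini--Tonelli (using $\alpha\theta>1$ to dominate the $\cosh$ growth) is a point the paper leaves implicit, but it is a refinement of, not a departure from, the paper's argument.
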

\begin{proof}
By using the integral representation of the gamma function in formula (\ref{finalsectionthm1thesis}) we can write
\begin{align}\mathbb{P}(F_{2\alpha}(S_{\theta}(t))\in dx)/dx=&\frac{1}{\pi x}\sum_{k=0}^{\infty}\left(\frac{x}{t^{\frac{1}{\alpha\theta}}}\right)^{2k+1}\frac{\Gamma\left(\frac{2k+1}{\alpha\theta}+1\right)}{(2k+1)!}\sin\left(\pi\frac{(2k+1)(n+1)}{2\alpha}\right)\nonumber\\
=&\frac{\alpha\theta t}{\pi x}\int_0^{+\infty}e^{-ty^{\alpha\theta}}y^{\alpha\theta-1}\sum_{k=0}^{\infty}\frac{(yx)^{2k+1}}{(2k+1)!}\sin\left(\pi\frac{(2k+1)(\alpha+1)}{2\alpha}\right)dy\nonumber\end{align}
By using formula (\ref{sinhseries}) as in theorem \ref{weibullthm}, we obtain
\begin{equation*}\mathbb{P}(F_{2\alpha}(S_{\theta}(t))\in dx)/dx=\frac{\alpha\theta t}{\pi x}\int_0^{+\infty}e^{-ty^{\alpha\theta}}y^{\alpha\theta-1}\sin\left(xy\cos\frac{\pi}{2\alpha}\right)\cosh\left(xy\sin\frac{\pi}{2\alpha}\right)dy.\end{equation*}
\end{proof}

\noindent We now show that the distribution of the time-changed Fresnel pseudoprocess $$Y_{2\alpha,\theta}(t)=F_{2\alpha}(S_{\theta}(t))$$
\noindent is a genuine probability distribution if the exponent of the stable subordinator $S_{\theta}(t)$ is chosen in a suitable way. We start by observing that the pseudoprocess $Y_{2\alpha,\theta}(t)$  has characteristic function
\begin{align}\mathbb{E}&\left[e^{i\gamma Y_{2\alpha,\theta}(t)}\right]=\mathbb{E}\left[\mathbb{E}\left[e^{i\gamma F_{2\alpha}(S_{\theta}(t))}\big\lvert S_{\theta}(t)\right]\right]=\mathbb{E}\left[\cos\left(\gamma^{\alpha} S_{\theta}(t)\right)\right]\nonumber\\
&\;\;\;=\frac{1}{2}\mathbb{E}\left[e^{i\sgn(\gamma)|\gamma|^{\alpha}S_{\theta}(t)}+e^{-i\sgn(\gamma)|\gamma|^{\alpha}S_{\theta}(t)}\right]\nonumber =\frac{1}{2}\left[e^{-t|\gamma|^{\alpha\theta}e^{i\frac{\theta\pi}{2}\sgn\gamma}}+e^{-t|\gamma|^{\alpha\theta}e^{-i\frac{\theta\pi}{2}\sgn\gamma}}\right]\nonumber \\
&\;\;\;=\frac{1}{2}\left[e^{-t|\gamma|^{\alpha\theta}\cos\frac{\pi\theta}{2}\left(1+i\tan\frac{\pi\theta}{2}\sgn\gamma\right)}+e^{-t|\gamma|^{\alpha\theta}\cos\frac{\pi\theta}{2}\left(1-i\tan\frac{\pi\theta}{2}\sgn\gamma\right)}\right]\label{genuineprob}
\end{align}  
\noindent where we have used formulas (\ref{thmfou}) and (\ref{subchf}).\\
\noindent The characteristic function of a stable process $H_{\nu}(\sigma, \beta, \mu; t)$ of exponent $\nu\neq1$ can be expressed in the form
\begin{equation}\label{stableprocessescharfun}\mathbb{E}\left[e^{i\gamma H_{\nu}(\sigma, \beta, \mu; t)}\right]=e^{-t\sigma^{\nu}|\gamma|^{\nu}\left(1-i\beta\sgn\gamma\tan\frac{\pi\nu}{2}\right)+i\mu t\gamma}\end{equation}
\noindent where $0<\nu\le2,\;\nu\neq1$ is the exponent of the stable process, $\sigma>0$ is the dispersion parameter, $\beta\in[-1,1]$ is the skewness parameter and $\mu\in\mathbb{R}$ is the drift parameter.\\
\noindent Denote by $Z_{\nu}(\sigma, \beta, \mu; t)$ the random variable
\begin{equation}\label{mix}Z_{\nu}(\sigma, \beta, \mu; t)=
\begin{dcases}
H_{\nu}(\sigma, \beta, \mu; t)\qquad&\text{with prob.}\;\frac{1}{2}\\
-H_{\nu}(\sigma, \beta, \mu; t)\qquad&\text{with prob.}\;\frac{1}{2}
\end{dcases}\end{equation}
\noindent where where $H_{\nu}(\sigma, \beta, \mu; t)$ is a stable random variable with characteristic function  (\ref{stableprocessescharfun}).
\noindent By setting
\begin{equation}\label{paramvalues}\nu=\alpha\theta,\qquad\beta=-\frac{\tan\frac{\pi\theta}{2}}{\tan\frac{\pi \alpha\theta}{2}},\qquad\sigma=\left(\cos\frac{\pi\theta}{2}\right)^{\frac{1}{\nu}},\qquad\mu=0\end{equation}
\noindent formula (\ref{genuineprob}) implies that 
\begin{equation}\label{indistribution}Y_{2\alpha,\theta}(t)\overset{i.d.}{=}Z_{\nu}(\sigma, \beta, \mu; t)\end{equation}
\noindent Observe that the result (\ref{indistribution}) holds if the constraints $\nu\in(0,2]$ and $\beta\in[-1,1]$ are satisfied. As discussed by Marchione and Orsingher \cite{marchione}, for fixed $\nu\in(0,2)$ and $\beta\in(-1,1),\;\beta\neq 0,$ it is always possible to choose $\alpha>1$ and $\theta\in(0,1)$ such that the relationships (\ref{paramvalues}) are satisfied.\\
\noindent Formula (\ref{mix}) shows that the Fresnel pseudoprocess time-changed with a stable subordinator is identical in distribution, for all $t>0$, to a stochastic process which is obtained by randomly choosing at time $t=0$ the sign of an asymmetric stable process. Since the positive and negative signs are equiprobable, the resulting random process is symmetric. As stable distributions are unimodal, one may expect the random variable (\ref{indistribution}) to have a bimodal distribution. We will now show that this is not necessarely true.\\
\noindent Consider the particular case of a Fresnel pseudoprocess $F_{2\alpha}(t)$ of order $2\alpha,\;\alpha>1$ time-changed with a stable subordinator $S_{\frac{1}{\alpha}}(t)$. The subordinated pseudoprocess
$$Y_{2\alpha,\frac{1}{\alpha}}(t)=F_{2\alpha}(S_{\frac{1}{\alpha}}(t))$$
\noindent has characteristic function
\begin{equation}\label{doublecauchycf}\mathbb{E}\left[e^{i\gamma Y_{2\alpha,\frac{1}{\alpha}}(t)}\right]=\frac{1}{2}\left[e^{-t|\gamma|\cos\frac{\pi}{2\alpha}\left(1+i\tan\frac{\pi}{2\alpha}\sgn\gamma\right)}+e^{-t|\gamma|\cos\frac{\pi}{2\alpha}\left(1-i\tan\frac{\pi}{2\alpha}\sgn\gamma\right)}\right]\end{equation}
\noindent whose inverse Fourier transform coincides with the mixture of two asymmetric Cauchy probability density functions. The resulting distribution is symmetric and reads
\begin{align}\mathbb{P}\left(Y_{2\alpha,\frac{1}{\alpha}}(t)\in dx\right)/dx=&\frac{1}{2\pi}\left[\frac{t\cos\frac{\pi}{2\alpha}}{x^2+t^2-2xt\sin\frac{\pi}{2\alpha}}+\frac{t\cos\frac{\pi}{2\alpha}}{x^2+t^2+2xt\sin\frac{\pi}{2\alpha}}\right]\nonumber\\
=&\frac{t\cos\frac{\pi}{2\alpha}}{\pi}\left[\frac{x^2+t^2}{x^4+t^4+2x^2t^2\cos\frac{\pi}{\alpha}}\right].\label{doublecauchydensity}\end{align}
\noindent The two modal points of the probability density function (\ref{doublecauchydensity}) are located at $x=\pm t\sin\frac{\pi}{2\alpha}$. For $\alpha=2$, our result coincides with formula (5.4) of the paper by Orsingher and D'Ovidio \cite{orsingherdovidio_fresnel}. For $1<\alpha<3$, the first derivative of the probability density function (\ref{doublecauchydensity}) has three roots in $x=0$ and $x=\pm t\sqrt{2\sin\frac{\pi}{2\alpha}-1}$. Thus, the density function has two modes in $x=\pm t\sqrt{2\sin\frac{\pi}{2\alpha}-1}$ and a minimum at the origin. However, for $\alpha\ge 3$, the only real root of the first derivative is in $x=0$. The mixture density is unimodal and reaches its maximum at the origin.

%%%%%%%%%%%%%%%%%%%%%%%%%%%%%%%%%%%%%%%%%%%%%%%%%%%
%%%%%%%%%%%%%%%%%%%%%%%%%%%%%%%%%%%%%%%%%%%%%%%%%%%
%%%%%%%%%%%%%%%%%%%%%%%%%%%%%%%%%%%%%%%%%%%%%%%%%%%
%%%%%%%%%%%%%%%%%%%%%%%%%%%%%%%%%%%%%%%%%%%%%%%%%%%

\section{The asymmetric Fresnel pseudoprocess}
\noindent In section \ref{section2} we have introduced asymmetric solutions to equation (\ref{horodseq_main}) for odd values of $n$. A natural fractional extension of such solutions is obtained by considering the function
\begin{equation}\label{asymmetric_pseudodensity_frac}u_{2\alpha, p}(x,t)=\frac{1}{(\alpha t)^{\frac{1}{\alpha}}}\Biggl[p\cdot \normalfont{\text{Ai}}_{\alpha}\left(-\frac{x}{(\alpha t)^{\frac{1}{\alpha}}}\right)+(1-p)\cdot\normalfont{\text{Ai}}_{\alpha}\left(\frac{x}{(\alpha t)^{\frac{1}{\alpha}}}\right)\Biggr],\;\; x\in\mathbb{R},\;t>0\end{equation}
\noindent where $p\in[0,1]$ and $\alpha>1$ as usual. It can be checked that the Fourier transform of the pseudo-density (\ref{asymmetric_pseudodensity_frac}) satisfies the ordinary differential equation
$$\frac{\partial^2 \widehat{u}_{2\alpha,p}(\gamma,t)}{\partial t^2}=-|\gamma|^{2\alpha}\widehat{u}_{2\alpha,p}(\gamma,t)$$
\noindent which implies that, similarly to the symmetric case, the pseudo-density is a solution to the fractional partial differential equation
$$\frac{\partial^2 u}{\partial t^2}=\frac{\partial ^{2\alpha}u}{\partial |x|^{2\alpha}},\qquad x\in\mathbb{R},\;t>0.$$
\noindent Moreover, by using the following probabilistic representation for the Airy function (Marchione and Orsingher \cite{marchione})
\begin{equation}\label{probabilistic_airy}\frac{1}{(\alpha t)^{\frac{1}{\alpha}}}\normalfont{\text{Ai}}_{\alpha}\left(\frac{x}{(\alpha t)^{\frac{1}{\alpha}}}\right)=\frac{1}{\pi x}\mathbb{E}\left[\sin\left(a_\alpha xG_{\alpha}(1/t)\right)e^{-b_\alpha xG_{\alpha}(1/t)}\right]\end{equation}
we have that 
\begin{equation}u_{2\alpha,p}(x,t)=\frac{1}{\pi x}\mathbb{E}\left[\sin\left(a_\alpha xG_{\alpha}(1/t)\right)\left(pe^{b_\alpha xG_{\alpha}(1/t)}+(1-p)e^{-b_\alpha xG_{\alpha}(1/t)}\right)\right]\label{weighted_exps}\end{equation}
\noindent where we have used the same notation of theorem \ref{frac_weibullthm}. Of course, for $p=\frac{1}{2}$, formula (\ref{weighted_exps}) coincides with (\ref{frac_cosh}).\\
\noindent Denote by $F_{2\alpha}^{(p)}(t)$ the asymmetric Fresnel pseudoprocess having pseudo-density (\ref{asymmetric_pseudodensity_frac}) and let $S_{\theta}(t)$ be a stable subordinator with characteristic function (\ref{subchf}) such that $F_{2\alpha}^{(p)}(t)$ and $S_{\theta}(t)$ are independent. Consider the subordinated pseudoprocess $$Y_{2\alpha,\theta}^{(p)}(t)=F_{2\alpha}^{(p)}(S_{\theta}(t)).$$
\noindent It can be verified that $Y_{2\alpha,\theta}^{(p)}(t)$ has characteristic function
\begin{align}\mathbb{E}\left[e^{i\gamma Y_{2\alpha,\theta}^{(p)}(t)}\right]=\left[p\cdot e^{-t|\gamma|^{\alpha\theta}\cos\frac{\pi\theta}{2}\left(1+i\tan\frac{\pi\theta}{2}\sgn\gamma\right)}+(1-p)\cdot e^{-t|\gamma|^{\alpha\theta}\cos\frac{\pi\theta}{2}\left(1-i\tan\frac{\pi\theta}{2}\sgn\gamma\right)}\right].\label{asymm_genuineprob}
\end{align}  

\noindent Denote by $Z_{\nu}(\sigma, \beta, \mu; t)$ the random variable
\begin{equation}\label{asymm_mix}Z_{\nu}(\sigma, \beta, \mu; t)=
\begin{dcases}
H_{\nu}(\sigma, \beta, \mu; t)\qquad&\text{with prob.}\;p\\
-H_{\nu}(\sigma, \beta, \mu; t)\qquad&\text{with prob.}\;1-p
\end{dcases}\end{equation}
\noindent where $H_{\nu}(\sigma, \beta, \mu; t)$ is a stable random variable with characteristic function  (\ref{stableprocessescharfun}). Formula (\ref{asymm_genuineprob}) implies that
\begin{equation}\label{asymm_indistribution}Y_{2\alpha,\theta}^{(p)}(t)\overset{i.d.}{=}Z_{\nu}(\sigma, \beta, \mu; t)\end{equation}
\noindent where we have used the parametrization (\ref{paramvalues}).
\noindent Thus, the asymmetric Fresnel pseudoprocess time-changed with a suitable stable subordinator is identical in distribution, for all $t>0$, to a stochastic process obtained by randomly choosing at the initial time $t=0$ the sign of an asymmetric stable process. The resulting random process is asymmetric for $p\neq\frac{1}{2}$.\\
\noindent By considering the special case of a mixture of Cauchy distributions, we have shown in the previous section that the symmetric mixture density of the process (\ref{mix}) can have one or three stationary points. In the first case the distribution is unimodal while in the second case it is bimodal. We will now discuss a third possibile case which can occur for the asymmetric process (\ref{asymm_mix}). For suitable choices of the parameters, the probability density function can have exactly two stationary points, one being a maximum point and the other one being an inflection point. Consider now an asymmetric mixture of Cauchy distributions, which emerges from the subordinated pseudoprocess
$$Y_{2\alpha,\frac{1}{\alpha}}^{(p)}(t)=F_{2\alpha}^{(p)}(S_{\frac{1}{\alpha}}(t)).$$
\noindent By using formula (\ref{asymm_genuineprob}) for $\theta=\frac{1}{\alpha},\;\alpha>1$
\noindent it can be shown that
\begin{align}\mathbb{P}\left(Y_{2\alpha,\frac{1}{\alpha}}^{(p)}(t)\in dx\right)/dx=&\frac{t\cos\frac{\pi}{2\alpha}}{\pi}\left[\frac{p}{x^2+t^2-2xt\sin\frac{\pi}{2\alpha}}+\frac{1-p}{x^2+t^2+2xt\sin\frac{\pi}{2\alpha}}\right]\nonumber\\
=&\frac{t\cos\frac{\pi}{2\alpha}}{\pi}\left[\frac{x^2+t^2+2(2p-1)xt\sin\frac{\pi}{2\alpha}}{x^4+t^4+2x^2t^2\cos\frac{\pi}{\alpha}}\right].\label{asymm_doublecauchydensity}\end{align}
\noindent In order to study the stationary points of the density (\ref{asymm_doublecauchydensity}), we should study in principle its first and second derivatives. By setting $f(x)=\mathbb{P}\left(Y_{2\alpha,\frac{1}{\alpha}}^{(p)}(t)\in dx\right)/dx$, we have that
\begin{align}f'(x)=\frac{2t\cos\frac{\pi}{2\alpha}}{\pi}\bigg[&\frac{\left(x+(2p-1)t\sin\frac{\pi}{2\alpha}\right)\left(x^4+t^4+2x^2t^2\cos\frac{\pi}{\alpha}\right)}{\left(x^4+t^4+2x^2t^2\cos\frac{\pi}{\alpha}\right)^2}\nonumber\\
&-\frac{2x\left(x^2+t^2+2xt(2p-1)\sin\frac{\pi}{2\alpha}\right)(x^2+t^2\cos\frac{\pi}{\alpha})}{\left(x^4+t^4+2x^2t^2\cos\frac{\pi}{\alpha}\right)^2}\bigg].\label{fifth}\end{align}
\noindent The numerator of the expression (\ref{fifth}) is a fifth-order polynomial in $x$. Its roots are difficult to find out and it is not even guaranteed that they are expressible in terms of radicals. The study of the fifth-order algebraic equation emerging from formula (\ref{fifth}) is beyond the aim of our paper. However, in order to prove that the density (\ref{asymm_doublecauchydensity}) can have an inflection point, it is sufficient for us to find a specific combination of $\alpha$ and $p$ for which the inflection point occurs.\\
\noindent A substantial simplification of formula (\ref{fifth}) can be obtained if $\alpha$ and $p$ are chosen in such a way that the terms $x+(2p-1)t\sin\frac{\pi}{2\alpha}$ and $x^2+t^2\cos\frac{\pi}{\alpha}$ in the numerator have the same root. Thus, by setting
\begin{equation}\label{rootrel}-(2p-1)t\sin\frac{\pi}{2\alpha}=\pm t\sqrt{-\cos\frac{\pi}{\alpha}}\end{equation}
\noindent we immediately obtain \begin{equation}p=\frac{\sin\frac{\pi}{2\alpha}\mp\sqrt{-\cos\frac{\pi}{\alpha}}}{2\sin\frac{\pi}{2\alpha}}.\label{p(a)}\end{equation}
\noindent This implies that, if $p$ satisfies the relationship (\ref{p(a)}), the point
\begin{equation}x=\pm t\sqrt{-\cos\frac{\pi}{\alpha}}\label{root}\end{equation}
is a root of (\ref{fifth}) and thus it is a stationary point of (\ref{asymm_doublecauchydensity}). Observe that, in order for (\ref{root}) to be a real root, we need to impose the constraint $1<\alpha<2$. Moreover, since $\sin\frac{\pi}{2\alpha}>\sqrt{-\cos\frac{\pi}{\alpha}}$ for $1<\alpha<2$, the constraint $p\in[0,1]$ is satisfied by formula (\ref{p(a)}). Finally, note that the choice of the sign in formula (\ref{root}) is arbitrary as long as the correct sign is then chosen in formula (\ref{p(a)}).\\
\noindent Our next step is to determine the value of $\alpha$ such that the stationary point (\ref{root}) is an inflection point. By taking into account formula (\ref{rootrel}), tedious but straightforward calculation permits to prove that
\begin{equation}\label{f22}f''\left(\pm t\sqrt{-\cos\frac{\pi}{\alpha}}\right)=\frac{1}{\pi}\frac{3\cos^2\frac{\pi}{2\alpha}-1}{2t^3\cos\frac{\pi}{2\alpha}\sin^4\frac{\pi}{2\alpha}}.\end{equation}
\noindent Of course, the second derivative (\ref{f22}) vanishes for
\begin{equation}\label{alpha*}\alpha=\frac{\pi}{2\arccos\left(\frac{1}{\sqrt{3}}\right)}.\end{equation}
\noindent Moreover, since $\frac{\pi}{4}<\arccos\left(\frac{1}{\sqrt{3}}\right)<\frac{\pi}{2}$, the constraint $1<\alpha<2$ is satisfied by formula (\ref{alpha*}). Thus, by setting $p=\frac{\sqrt{2}\mp1}{2\sqrt{2}}$ as in formula (\ref{p(a)}), we have found a particular combination of $\alpha$ and $p$ for which the probability density function (\ref{asymm_doublecauchydensity}) has an inflection point in $$x=\pm\frac{t}{\sqrt{3}}.$$
\noindent For such a choice of $\alpha$ and $p$, we can rewrite formula (\ref{fifth}) as
\begin{equation}f'(x)=-\frac{2t}{\pi\sqrt{3}}\;\frac{\left(x\mp\frac{t}{\sqrt{3}}\right)^2\left(x^3\mp \frac{t}{\sqrt{3}} x^2 + t^2x \pm\sqrt{3}t^3\right)}{\left(x^4+t^4-\frac{2t^2}{3}x^2\right)^2}\label{final}\end{equation}
\noindent The expression (\ref{final}) has two coinciding real roots in $x=\pm\frac{t}{\sqrt{3}}$, which represent the inflection point of the density, and a real root which represents the maximum point. Observe that the expression $x^3\mp \frac{t}{\sqrt{3}} x^2 + t^2x \pm\sqrt{3}t^3$ in the numerator of (\ref{final}) is strictly monotonically increasing and thus it has only one real root.


\begin{thebibliography}{10}

\bibitem{ansari}
Alireza Ansari and Hassan Askari, \emph{{On fractional calculus of
  $A_{2n+1}(x)$ function}}, Applied Mathematics and Computation \textbf{232}
  (2014), 487--497.

\bibitem{askari}
Hassan {Askari} and Alireza {Ansari}, \emph{{On Mellin transforms of solutions
  of differential equation {\ensuremath{\chi}}$^{(n )}$(x )
  +{\ensuremath{\gamma}}$_{n}$x {\ensuremath{\chi}} (x ) =0}}, Analysis and
  Mathematical Physics \textbf{10} (2020), no.~4, 57.

\bibitem{hilbert}
R.~Courant and D.~Hilbert, \emph{{Methods of Mathematical Physics Vol. I}},
  Wiley, New York, 1989.

\bibitem{degregorioorsingher}
Alessandro De~Gregorio and Enzo Orsingher, \emph{{Random flights connecting
  porous medium and Euler–Poisson–Darboux equations}}, Journal of
  Mathematical Physics \textbf{61} (2020), no.~4, 041505.

\bibitem{ibragimov}
I.A. Ibragimov, N.V. Smorodina, and M.M. Faddeev, \emph{{Limit theorems for
  symmetric random walks and probabilistic approximation of the Cauchy problem
  solution for Schrödinger type evolution equations}}, Stochastic Processes
  and their Applications \textbf{125} (2015), no.~12, 4455--4472.

\bibitem{krylov}
V.~Yu. Krylov, \emph{{S}ome properties of the distribution corresponding to
  equation $\frac{\partial u}{\partial
  t}=(-1)^{q+1}\frac{\partial^{2q}u}{\partial x^{2q}}$}, Soviet Math. Dokl.
  \textbf{1} (1960), 760--763.

\bibitem{lachal}
Aimé Lachal, \emph{{Distributions of Sojourn Time, Maximum and Minimum for
  Pseudo-Processes Governed by Higher-Order Heat-Type Equations}}, Electronic
  Journal of Probability \textbf{8} (2003), 1 -- 53.

\bibitem{lachal2}
Aimé Lachal, \emph{{First Hitting Time and Place, Monopoles and Multipoles for
  Pseudo-Processes Driven by the Equation $\partial u/\partial t=\pm\partial^N
  u/\partial x^N$}}, Electronic Journal of Probability \textbf{12} (2007),
  no.~none, 300 -- 353.

\bibitem{marchione}
Manfred~Marvin Marchione and Enzo Orsingher, \emph{{Stable distributions and
  pseudo-processes related to fractional Airy functions}}, arXiv.2204.09426
  (2022).

\bibitem{orsingher}
Enzo Orsingher, \emph{{Processes governed by signed measures connected with
  third-order ``heat-type'' equations}}, Lithuanian Mathematical Journal
  \textbf{31} (1991), 220--231.

\bibitem{orsingherdovidio_fresnel}
Enzo Orsingher and Mirko D'Ovidio, \emph{{Vibrations and Fractional Vibrations
  of Rods, Plates and Fresnel Pseudo-Processes}}, Journal of Statistical
  Physics \textbf{145} (2011), 143--174.

\bibitem{orsingherdovidio_prob}
Enzo Orsingher and Mirko D'Ovidio, \emph{{Probabilistic representation of fundamental solutions to
  $\frac{\partial u}{\partial t} = \kappa_m \frac{\partial^m u}{\partial
  x^m}$}}, Electronic Communications in Probability \textbf{17} (2012), 1 --
  12.

\bibitem{platonova}
M.~V. Platonova and S.~V. Tsykin, \emph{{Probabilistic Approximation of the
  Solution of the Cauchy Problem for the Higher-Order Schrödinger Equation}},
  {Theory of Probability \& Its Applications} \textbf{65} (2021), no.~4,
  558--569.

\bibitem{prudnikov}
A.P. Prudnikov, Yu.A. Brychkov, and O.I. Marichev, \emph{Integrals and series,
  vol. 3: More special functions}, Gordon and Breach, New York, 1989.

\bibitem{riesz}
Marcel Riesz, \emph{{L'intégrale de Riemann-Liouville et le problème de
  Cauchy}}, Acta Mathematica \textbf{81} (1949).

\end{thebibliography}
\end{document}